\documentclass[]{article}
\usepackage[english]{babel}

\usepackage[a4paper,top=3cm,bottom=3cm,left=3cm,right=3cm,marginparwidth
=1.75cm]{geometry}
\usepackage{indentfirst}
\usepackage{amsmath,amssymb,amsbsy}
\usepackage{mathrsfs}
\usepackage{amsthm}
\usepackage{graphicx}
\usepackage[colorlinks=true, allcolors=blue]{hyperref}
\usepackage{amsfonts}
\usepackage{authblk}
\usepackage{fancyhdr}
\theoremstyle{definition}

\theoremstyle{plain}
\newtheorem{Th}{Theorem}
\newtheorem{Lem}{Lemma}
\theoremstyle{remark}
\newtheorem{rem}{Remark}

\title{Higher moments of the symmetric square $L$-function off the critical line}
\author[1]{Youjun Wang}
\date{}
\begin{document}
	
	\maketitle
	
	\begin{abstract}
		Let $f$ be the Hecke eigenform for the modular group $SL_2(\mathbb{Z})$, and
		$L(s, \textup{sym}^2 f)$
		be the symmetric square $L$-function associated with $f$. For $\frac{1}{2}<\sigma<1$, define
		$m(\sigma)$ as the supremum of all numbers $m$ such that
		\[
		\int_{1}^T|L(\sigma+it, \textup{sym}^2 f)|^m \textup{d}t\ll_f T^{1+\varepsilon},
		\]
		where $\epsilon>0$ is an arbitrarily small number. In this paper, we established the bound
		\begin{align*}
			m(\sigma)\geq \frac{17}{26-28\sigma}, \text{ for }\frac{5}{8}\leq\sigma\leq\frac{52}{73},
		\end{align*}
		which improved our previous result.

		\textbf{Keywords:} Integral moment, symmetric square $L$-functions
		
		\textbf{2020 Mathematics Subject Classification} 11F11 11F66
	\end{abstract}
	
	\section{Introduction}
Let $f(z)$ be a Hecke eigen cusp form of even integral weight $k$ for $\Gamma = SL(2, \mathbb{Z})$. Suppose that $f(z)$ has the following Fourier expansion at the cusp $\infty$:
\[
    f(z) = \sum_{n=1}^{\infty} \lambda_f(n) n^{\frac{k-1}{2}} e(n z),
    \]

where $e(x) := e^{2\pi i x}$ and the $n$-th normalized Fourier coefficient $\lambda_f(n)$ of $f$ is the eigenvalue of the Hecke operator $T_n$. From the theory of Hecke operators, $\lambda_f(n)$ is real, and for all $n \geq 1$,

\begin{align}\label{1.1}
        \vert\lambda_f(n)\vert \leq d(n),
    \end{align}

where $d(n)$ denotes the divisor function. This is the well-known Ramanujan Conjecture, which was proved by Deligne \cite{Deligne} in 1974. In other words, Deligne showed that for any prime $p$, there exist complex numbers $\alpha(p)$ and $\beta(p)$ such that

\[
    \alpha(p) + \beta(p) = \lambda_{f}(p),\quad \alpha(p)\beta(p) = 1 \quad \text{and} \quad \vert\alpha(p)\vert = \vert\beta(p)\vert = 1.
    \]

Therefore, $L(s,f)$ can be expressed as

\[
    L(s,f) = \prod_{p} \left(1 - \frac{\alpha(p)}{p^{s}}\right)^{-1} \left(1 - \frac{\beta(p)}{p^{s}}\right)^{-1} \quad (\Re(s) > 1).
    \]

Associated with each $f(z)$, the $j$-th symmetric power $L$-function is defined by

\begin{align*}
        L(s, \text{sym}^{j}f) &= \sum_{n=1}^{\infty} \frac{\lambda_{\text{sym}^{j}f}(n)}{n^{s}} = \prod_{p} \prod_{m=0}^{j} \left(1 - \frac{\alpha_{f}^{j-m}(p) \beta_{f}^{m}(p)}{p^{s}}\right)^{-1}, \quad \Re(s) > 1.
    \end{align*}

Here, $\textup{sym}^1 f := f$. The moments of $L$-functions play an important role in analytic number theory. In 1927, Ingham \cite{Ingham} established an asymptotic formula for the second moment of the Riemann $\zeta$-function on the critical line using the approximate formula for $\zeta(s)$, namely:
\[\int_{1}^{T} \left\vert\zeta\left(\frac{1}{2} + it\right)\right\vert^{2} \text{d}t = T\log T - (1 + \log 2\pi - 2\gamma)T + O\left(T^{\frac{1}{2} + \varepsilon}\right).\]
In 1979, Heath-Brown \cite{HB} studied the fourth moment of the Riemann $\zeta$-function on the critical line and obtained the following result:
\[\int_{1}^{T} \left\vert\zeta\left(\frac{1}{2} + it\right)\right\vert^{4} \text{d}t = a_4 T\log^4 T + a_3 T\log^3 T + a_2 T\log^2 T + a_1 T\log T + a_0 T + O\left(T^{\frac{7}{8} + \varepsilon}\right),\]
where $a_i$ ($i = 0, 1, \ldots, 4$) are constants. For the higher moments of the $\zeta$-function on the critical line, define $M(A)$ as the infimum of all numbers $M \geq 1$ such that
\[\int_{1}^{T} \left\vert\zeta\left(\frac{1}{2} + it\right)\right\vert^{A} \ll T^{M + \varepsilon}.\]
In 1985, Ivi\'c \cite{Ivic} derived an upper bound for $M(A)$ when $A \geq 4$. Specifically, he showed that
\begin{align*}
        M(A) \leq \begin{cases}
            1 + \frac{A - 4}{8} &\quad 4 \leq A \leq 12,\\
            2 + \frac{3(A - 12)}{22} &\quad 12 \leq A \leq 178/13,\\
            1 + \frac{35(A - 6)}{216} &\quad A \geq 178/13.
        \end{cases}
    \end{align*}
Esipecially, there is
\[
\int_{1}^{T} \left\vert\zeta\left(\frac{1}{2} + it\right)\right\vert^{6} \ll T^{\frac{5}{4} + \varepsilon}.
\]
For $GL(2)$ $L$-function $L(s,f)$, Good \cite{Good} studied the second moment on the critical line and established 
the following asymptotic formula
\[
\int_{1}^{T}\left\vert L\left(\frac{1}{2}+it,f\right)\right\vert^{2}\textup{d}t=2c_{-}T\left(\log\left(\frac{T}{2\pi e}\right)+c_0\right)+O\left(T^{2/3}\log^{2/3}T\right).
\]
It's hard to estimate the higher moments of $L(s,f)$. In 1987 , Jutila \cite{Jutila} considered the sixth moment and obtained the upper bound as following
\[
\int_{1}^{T}\left\vert L\left(\frac{1}{2}+it,f\right)\right\vert^{6}\textup{d}t\ll T^{2+\varepsilon}.
\]
Naturally, the integral moments of $GL(3)$ $L$-function on critical line also are concerned. However, it's even hard 
to get a non-trivial bound of second moment. Denote $L(s,F)$ as the $L$-function associated to Hecke-Maass cusp form
 $F$ for $SL(3,\mathbb{Z})$. In 2022, Pal \cite{Pal} first obtained the non-trivial bound for $L(s,F)$ on critical line. In fact, there is
\[
\int_{T}^{2T}\left\vert L\left(\frac{1}{2}+it,F\right)\right\vert^{2}\text{d}t\ll T^{\frac{3}{2}-\frac{3}{32}+\varepsilon}.
\]
Later, in 2024, Dasgupta et al. \cite{DLY} improved the upper bound
\[
\int_{T}^{2T}\left\vert L\left(\frac{1}{2}+it,F\right)\right\vert^{2}\text{d}t\ll T^{\frac{3}{2}-\frac{1}{6}+\varepsilon}.
\]
In the other hand, the integral moments of $L$-functions for $\frac{1}{2}<\Re s<1$ also are important. In 1985, Ivi\'c \cite{Ivic} invesigated the higher moments of Riemann $\zeta$-function for $\frac{1}{2}<\Re s<1$. In fact, define $m(\sigma)$ as the supremum of all numbers $m$ such that
\[
\int_{1}^{T}\left\vert\zeta(\sigma+it)\right\vert^{m}\text{d}t\ll T^{1+\varepsilon}.
\]
Ivi\'c obtained the lowwer bound of $m(\sigma)$ for $\frac{1}{2}<\sigma<1$. Similarly, Ivi\'c \cite{Ivic0} also considered the moments of $L(\sigma + it, f)$ for $\sigma > \frac{1}{2}$. For $\frac{1}{2} \leq \sigma \leq 1 - \varepsilon$, define $m'(\sigma)$ as the supremum of all numbers $m$ such that
\[\int_{1}^T \vert L(\sigma + it, f)\vert^m \textup{d}t \ll T^{1 + \varepsilon}.\]
 Using the sixth moment of $L(\frac{1}{2} + it, f)$ and an argument similar to that used for the Riemann zeta function, he obtained

\[\quad m'(\sigma) \geq \frac{2}{3 - 4\sigma}, \quad \frac{1}{2} \leq \sigma \leq \frac{5}{8}.\]
Esipecially, for $\sigma=\frac{5}{8}$, there holds
\[
\int_{1}^T \vert L(\frac{5}{8} + it, f)\vert^{4} \textup{d}t \ll T^{1 + \varepsilon}.
\]
In 2016, Liu, Li  Zhang \cite{LLZ} generalized Ivi\'c's result to Maass forms. They proved that

\begin{align*}\label{msigma1}
        m'(\sigma) \geq \frac{5 - 2\sigma}{2(1 - \sigma)(3 - 2\sigma)}, \quad \frac{5}{8} < \sigma \leq 1 - \varepsilon.
    \end{align*}
Now we focus on the symmetric square $L$-function. For $\frac{1}{2} < \sigma < 1$, let $m(\sigma)$ denote the supremum of all numbers $m$ such that
\[\int_{1}^T \vert L(\sigma + it, \textup{sym}^2 f)\vert^m \textup{d}t \ll T^{1 + \varepsilon}.\]
In our previous work \cite{Wang}, we established the bound
\begin{align*}
        m(\sigma) \geq \begin{cases}
            \frac{14}{27 - 30\sigma} & \text{if } \frac{2}{3} \leq \sigma \leq \frac{27}{37},\\
            \frac{5}{6(1 - \sigma)} - \frac{1}{4\sigma} & \text{if } \frac{27}{37} \leq \sigma \leq 1 - \varepsilon.
        \end{cases}
    \end{align*}
Based on recent work on the second moment and subconvexity bound of the symmetric square $L$-function, we can improve the above result for $\frac{5}{8} \leq \sigma \leq \frac{52}{73}$. Specifically, we obtain the following:
	\begin{Th}\label{theorem1}
		Let $L(s, \textup{sym}^2 f)$ be the symmetric square $L$-function related to the
		Hecke eigenform $f$ and $m(\sigma)$ be the supremum of all numbers $m$ such that
		\[
		\int_{1}^T|L(\sigma+it, \textup{sym}^2 f)|^m \textup{d}t\ll T^{1+\varepsilon}.
		\]
		Then we have
		\begin{align*}
			m(\sigma)\geq \frac{17}{26-28\sigma}, \text{ for }\frac{5}{8}\leq\sigma\leq\frac{52}{73}.
		\end{align*}
	\end{Th}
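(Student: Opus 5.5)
The plan is to follow Ivić's large-values method for $\zeta$, as adapted in our previous work \cite{Wang}, and to feed in two sharper inputs: the recent second moment bound for $L(\tfrac12+it,\textup{sym}^2 f)$ and the recent subconvexity bound $L(\tfrac12+it,\textup{sym}^2 f)\ll (1+|t|)^{\mu+\varepsilon}$. I have not pinned down the numerical values of these inputs; the constants $17,26,28$ and the endpoints $\frac58$, $\frac{52}{73}$ should come out of them in the optimization at the end.

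\textbf{Step 1 (reduction to large values).} Dyadically decompose $[1,T]$ into intervals $[T_0,2T_0]$. For each $V>0$, choose a set of well-spaced points $t_r\in[T_0,2T_0]$, $r\le R=R(V)$, with $|t_r-t_s|\ge 1$ for $r\ne s$ and $|L(\sigma+it_r,\textup{sym}^2 f)|\ge V$. Using Cauchy's integral formula (or Gallagher's lemma), we have
\[
\int_{T_0}^{2T_0}|L(\sigma+it,\textup{sym}^2 f)|^m\,\textup{d}t\ll T^{\varepsilon}\Bigl(T_0+\max_{V}V^mR(V)\Bigr).
\]
So it suffices to show $R(V)\ll T^{1+\varepsilon}V^{-m}$ for $m=\frac{17}{26-28\sigma}$, uniformly in $V$ up to the pointwise maximum of $|L|$.

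\textbf{Step 2 (two bounds for $R$).} First, a convexity bound for $L(\sigma+it)$ follows from Phragmén–Lindelöf (or a Hardy–Littlewood–Gabriel convexity argument) applied to the second moment. I would interpolate between the line $\Re s=\tfrac12$, where the new second moment bound holds, and a line $\Re s=1+\varepsilon$, where the fourth or higher moment is $\ll T^{1+\varepsilon}$. This gives a mean-value bound at abscissa $\sigma$, and hence $R\ll T^{a(\sigma)+\varepsilon}V^{-2}$ (or the analogue with a higher power). Second, apply an approximate functional equation with a smooth weight to reduce $L(\sigma+it_r)$ to Dirichlet polynomials $\sum_{n\sim N}\lambda_{\textup{sym}^2 f}(n)n^{-\sigma-it}$ with $N\ll T^{3/2+\varepsilon}$. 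Raise each polynomial to a suitable power $k$ so that its length is roughly $T$, and apply the Montgomery–Huxley large values estimate
\[
R\ll T^{\varepsilon}\bigl(N^{2-2\sigma}V^{-2}+TN^{4-6\sigma}V^{-6}\bigr),
\]
in its $k$-th power form. The subconvexity exponent $\mu$ enters twice. It caps the range of $V$ through $V\ll T^{\mu(\sigma)+\varepsilon}$, where $\mu(\sigma)$ is the linear interpolation between $\mu$ at $\sigma=\tfrac12$ and $0$ at $\sigma=1$. It also lets me shorten the Dirichlet polynomial, via the reflection $s\mapsto 1-s$ and the standard trick of bounding the tail contribution using $\max|L|$.

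\textbf{Step 3 (optimization).} Combine the bounds by splitting the range of $V$ at a threshold $V_0=T^{\theta}$. For $V\le V_0$ use the moment-based bound $T^{a(\sigma)}V^{-2}$, and for $V\ge V_0$ use the Huxley-type bound together with the cap from $\mu(\sigma)$. Choosing $\theta$ to balance the two yields $V^mR\ll T^{1+\varepsilon}$ exactly when $m\le \frac{17}{26-28\sigma}$. I then check the endpoints. At $\sigma=\tfrac58$ the bound gives $m=2$, consistent with the second moment interpolation alone. At $\sigma=\frac{52}{73}$ the bound meets the previous result of \cite{Wang}, and beyond this point a different term of the large values estimate dominates, which explains the restricted range.

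The main obstacle is Step 3 together with the choice of $k$ in Step 2. The exponents $a(\sigma)$, $\mu(\sigma)$ and the Dirichlet polynomial length must be balanced so that every term in the large values estimate is $\le T^{1+\varepsilon}V^{-m}$ simultaneously, and the linear form $26-28\sigma$ has to fall out exactly. I would first do the computation with the Huxley term absent, using only the interpolated second moment and the sup bound. I would then check whether the Huxley term is harmless on $[\tfrac58,\tfrac{52}{73}]$, since its crossover with the other terms is what should determine the endpoint $\frac{52}{73}$.
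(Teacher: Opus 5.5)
There is a genuine gap. Your outline (reduce to large values, split the $V$-range at a threshold $V_0$, use the second moment for small $V$ and subconvexity for large $V$) has the right shape. But it lacks the mechanism that produces $\frac{17}{26-28\sigma}$, and both bounds planned in Step 2 fail where they are needed.

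\textbf{The small-$V$ bound.} You obtain it by convexity (Phragm\'en--Lindel\"of/Gabriel) interpolation of the second moment between $\Re s=\frac12$, where it is $T^{4/3}$, and $\Re s=1+\varepsilon$.
\begin{itemize}
\item Any such interpolation gives $R\ll T^{a}V^{-c}$ with $a>1$ strictly for $\sigma<1$. For example $a=\frac{5-2\sigma}{3}$, $c=2$, so $a=\frac54$ at $\sigma=\frac58$.
\item A bound whose $T$-exponent exceeds $1$ can never be $\ll T^{1+\varepsilon}V^{-m}$ for $V$ near $T^{\varepsilon}$. So your endpoint check ``$m(\frac58)=2$'' fails on this route.
\item The full-length approximate functional equation ($N\asymp T^{3/2}$) does not rescue it. Its top dyadic pieces give $N^{2-2\sigma}V^{-2}=T^{3-3\sigma}V^{-2}$, which is $\le T$ only for $\sigma\ge\frac23$. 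Pieces already longer than $T$ cannot be raised to a power $k$ to reach length $T$.
\end{itemize}
The paper instead uses the smoothed identity
\[
\sum_{n\ge1}\lambda_{\textup{sym}^2f}(n)e^{-n/Y}n^{-s}=L(s,\textup{sym}^2f)+\frac{1}{2\pi i}\int_{(\frac12-\sigma)}Y^{w}\Gamma(w)L(s+w,\textup{sym}^2f)\,\textup{d}w
\]
with a free cut $Y$. Each $t_r$ then either makes a Dirichlet polynomial of length $\le Y$ large, or forces $|L(\frac12+it_r',\textup{sym}^2f)|\gg VY^{\sigma-\frac12}T^{-\varepsilon}$. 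The first alternative is counted by Ivi\'c's Lemma 8.2, giving $Y^{2-2\sigma}V^{-2}+TV^{-f(\sigma)}$. Taking $Y=T^{4/3}$ and applying Cauchy--Schwarz against the $T^{4/3+\varepsilon}$ second moment gives
\[
R\ll T^{\frac83(1-\sigma)+\varepsilon}V^{-2}+T^{1+\varepsilon}V^{-f(\sigma)},
\]
and $\frac83(1-\sigma)\le1$ is exactly the condition $\sigma\ge\frac58$.

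\textbf{The large-$V$ bound.} The subconvexity exponent $\frac47$ is not used as a cap on $V$. The range $T^{(26-28\sigma)/21}\le V\le T^{\frac87(1-\sigma)}$ is nonempty for every $\sigma>\frac12$, so a cap leaves large $V$ uncovered. Nor is it used to shorten a polynomial. It enters through H\"older:
\[
\sum_r|L(\tfrac12+it_r',\textup{sym}^2f)|^4\le\big(\max|L(\tfrac12+it,\textup{sym}^2f)|\big)^2\sum_r|L(\tfrac12+it_r',\textup{sym}^2f)|^2\ll T^{\frac87+\frac43+\varepsilon}=T^{\frac{52}{21}+\varepsilon}.
\]
With the $V$-dependent choice $Y=T^{\frac{26}{21\sigma}}V^{-\frac1\sigma}$ this yields $R\ll T^{\frac{52(1-\sigma)}{21\sigma}+\varepsilon}V^{-\frac2\sigma}+T^{1+\varepsilon}V^{-f(\sigma)}$.

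\textbf{The optimization.} For the target $R\ll T^{1+\varepsilon}V^{-z}$ with $2<z<\frac2\sigma$:
\begin{itemize}
\item the small-$V$ bound covers $V\le T^{\frac{8\sigma-5}{3(z-2)}}$;
\item the large-$V$ bound covers $V\ge T^{\frac{52-73\sigma}{21(2-\sigma z)}}$.
\end{itemize}
These ranges overlap if and only if $4z(1-\sigma)(13-14\sigma)\le 34(1-\sigma)$, that is $z\le\frac{17}{26-28\sigma}$. The common threshold is $V_0=T^{(26-28\sigma)/21}$, and one checks $\frac{17}{26-28\sigma}\le f(\sigma)$.

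The right endpoint $\frac{52}{73}$ is where $\frac{52(1-\sigma)}{21\sigma}=1$, equivalently where $\frac{17}{26-28\sigma}=\frac2\sigma$. It is not a crossover with a Huxley term. Nor does the new bound meet your earlier one there: at $\sigma=\frac{52}{73}$ it gives $\frac{73}{26}$, against $\frac{14}{27-30\sigma}=\frac{1022}{411}$.

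Since you also leave the numerical inputs and the final optimization open, the proposal as written has no step that could produce the stated exponent.
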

\begin{rem}
Note that the Perelli's \cite{Perelli} classical bound yields $m(\frac{2}{3})=2$, and we improve this bound in the present work..
\end{rem}
\textbf{Notation.} Throughout this paper, the letter $\varepsilon$ represents a sufficiently small positive constant, not necessarily the same at each occurrences. The constants, both explicit and implicit, in Vinogradov symbols may depend on $f$ and $\varepsilon$.
	\section{Preliminary Lemmas}
	In this section, we give some useful lemmas
	which play important roles in the proof of the theorem.
	
	\begin{Lem}\label{short}
		Let $T\leq t \leq 2T$ and $k \geq 1$ be a fixed integer.
		Then for $\frac{1}{2}<\sigma <1$, we have
		\[
		|L(\sigma+ it, \textup{sym}^{2} f)|^k
		\ll 1+\log T\int_{-\log^2 T}^{\log^2 T}
		\left| L\Big(\sigma-\frac{1}{\log T}+i t+ i v, \textup{sym}^2 f\Big)
		\right|^k \exp(-|v|)\textup{d}v.
		\]
		For $\sigma=\frac{1}{2}$, we have
		\[
		\left| L\Big(\frac{1}{2}+ it, \textup{sym}^{2} f\Big)\right|^k
		\ll \log T\left(1+\int_{-\log^2 T}^{\log^2 T} \left|
		L\Big(\frac{1}{2}+i t+ i v, \textup{sym}^2 f\Big)\right| ^k \exp(-\vert v \vert)
		\textup{d}v\right).
		\]
	\end{Lem}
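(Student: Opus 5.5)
This is the standard convexity/Cauchy-integral argument used by Ivić for $\zeta(s)$ (Lemma 7.1 of his book), adapted to $L(s,\textup{sym}^2 f)$; we only need that $L(s,\textup{sym}^2 f)$ is entire of finite order with polynomial growth in vertical strips. The plan is to apply Cauchy's integral formula, or more precisely the mean-value property of a subharmonic function, to a suitable holomorphic function built from $L^k$ and a Gaussian-type weight, over a contour consisting of the vertical line $\Re w = \sigma-\frac{1}{\log T}$ and a line to the right.

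\textbf{Step 1: the integral representation.} Let $s_0=\sigma+it$ and $a=\sigma-\frac{1}{\log T}$. Consider
\[
I=\frac{1}{2\pi i}\int_{\mathcal{C}} L(w,\textup{sym}^2 f)^k\,\Gamma(w-a)\,(\textup{or a similar kernel})\,\frac{\textup{d}w}{w-s_0},
\]
or, more simply, use $\exp\bigl((w-s_0)^2\bigr)$ as the damping kernel. Take $\mathcal{C}$ to be the boundary of the rectangle with vertical sides $\Re w=a$ and $\Re w=b$, where $b=2$, and horizontal sides $\Im w=t\pm\log^2 T$. By the residue theorem,
\[
L(s_0,\textup{sym}^2 f)^k=\frac{1}{2\pi i}\int_{\mathcal{C}} L(w)^k\,e^{(w-s_0)^2}\,\frac{\textup{d}w}{w-s_0}.
\]

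\textbf{Step 2: bounding the pieces.}
\begin{itemize}
\item On the horizontal sides, $|e^{(w-s_0)^2}|\le e^{1-\log^4 T}$, while $L\ll T^{A}$ by the convexity bound. Their contribution is therefore negligible.
\item On the right side $\Re w=2$, the Dirichlet series converges absolutely, so $L\ll 1$, and $\int|e^{(w-s_0)^2}|\,|\textup{d}w|/|w-s_0|\ll1$. This gives the term $1$.
\item On the left side $w=a+it+iv$, we have $|w-s_0|\ge \frac{1}{\log T}$, which produces the factor $\log T$. Also $|e^{(w-s_0)^2}|=e^{(a-\sigma)^2-v^2}\ll e^{-v^2}\le e^{-|v|}\cdot e$.
\end{itemize}
Together these give the first claim.

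\textbf{Step 3: the case $\sigma=\frac12$.} Here one wants the same abscissa on both sides, so we shift to $\Re w=\frac12-\frac1{\log T}$ and then use the functional equation to reflect to $\Re w=\frac12+\frac1{\log T}$. Since $\bigl|\frac{\gamma(1-w)}{\gamma(w)}\bigr|\asymp |t|^{3(1-2\Re w)/2}$, which is $\asymp1$ when $|\Re w-\frac12|\le1/\log T$, the values of $|L|$ at $\frac12\pm\frac1{\log T}+iu$ are comparable. Then a second application of the same convexity argument (the Gabriel or Hardy–Littlewood–Pólya type mean value of the subharmonic function $|L|^k$ over small discs) moves the line back to $\Re w=\frac12$. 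An alternative that avoids this second step is to take the contour with left side $\Re w=\frac12-\frac1{\log T}$ and right side $\Re w=\frac12+\frac1{\log T}$ directly. Both vertical sides then involve $|L|^k$ on lines within $1/\log T$ of $\frac12$, and each is related to the $\frac12$-line by the functional-equation symmetry and the mean-value property. The factor $\log T$ outside comes from $1/|w-s_0|$.

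\textbf{Main obstacle.} Step 3 is the delicate step: getting exactly the line $\Re=\frac12$ inside the integral, rather than a nearby line, with only a $\log T$ loss. I would handle it via the subharmonicity of $|L|^k$: average over a disc of radius $\frac1{\log T}$ and bound by the maximum of the $\exp(-|v|)$-weighted integrals over the lines, using the functional-equation comparison.
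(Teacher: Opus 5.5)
Your treatment of $\frac12<\sigma<1$ is correct. It is essentially the contour-shift argument behind Ivi\'c's Lemma 7.1, which the paper invokes, with the Gaussian kernel $e^{(w-s_0)^2}/(w-s_0)$ in place of $\Gamma(w)$. (On the horizontal sides the bound is $e^{(2-\sigma)^2-\log^4T}$ rather than $e^{1-\log^4T}$, which is harmless.)

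The gap is the case $\sigma=\frac12$, which you do not actually prove. Write $L(w)=L(w,\textup{sym}^2f)$ and $\delta=1/\log T$. The functional equation, together with $|L(\bar w)|=|L(w)|$, only exchanges the lines $\Re w=\frac12-\delta$ and $\Re w=\frac12+\delta$; it never puts you on $\Re w=\frac12$. So your contour with sides $\Re w=\frac12\pm\delta$ only gives $|L(\frac12+it)|^k\ll\log T\int_{|v|\le\log^2T}|L(\frac12+\delta+it+iv)|^ke^{-v^2}\,dv+T^{-1}$. A further step from $\frac12+\delta$ to $\frac12$ is needed.

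If that step is ``the same argument'' as Step 2, i.e.\ the first part of the lemma at $\sigma'=\frac12+\delta$ with the crude bound $|w-s_1|^{-1}\le\log T$, you pick up a second factor $\log T$ and end with $\log^2T$ instead of $\log T$. The disc average does not rescue this by itself. It involves $|L|^k$ on every line $\Re w=\frac12+x$ with $0<x\le\delta$, and moving those onto $\Re w=\frac12$ is the same problem; with the crude factor $x^{-1}$ the average over $x$ even diverges.

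The step can be closed in either of two ways.

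(i) Do not replace the kernel by its supremum. The rectangle with sides $\Re w=\frac12\pm\delta$ gives
\[
|L(\tfrac12+it)|^k\ll\int_{|v|\le\log^2T}|L(\tfrac12+\delta+it+iv)|^k\frac{e^{-v^2}}{\sqrt{\delta^2+v^2}}\,dv+T^{-1}.
\]
Your Step 2 at $s_1=\frac12+\delta+i\tau$, with left side $\Re w=\frac12$, gives $|L(s_1)|^k\ll1+\int_{|u|\le\log^2T}|L(\frac12+i\tau+iu)|^ke^{-u^2}(\delta^2+u^2)^{-1/2}\,du$. Composing the two, the constant term contributes $\int e^{-v^2}(\delta^2+v^2)^{-1/2}\,dv\ll\log\log T$. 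By Cauchy--Schwarz the composed kernel satisfies
\[
\int_{\mathbb{R}}\frac{e^{-v^2-(w-v)^2}\,dv}{\sqrt{\delta^2+v^2}\,\sqrt{\delta^2+(w-v)^2}}\le e^{-w^2/2}\int_{\mathbb{R}}\frac{dv}{\delta^2+v^2}=\pi\log T\,e^{-w^2/2},
\]
so only one factor $\log T$ is lost.

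(ii) Alternatively, keep the crude first step and apply Gabriel's convexity theorem (with $q=1$) to $F(w)=L(w)^ke^{(w-s_0)^2}$ on the strip $\frac12\le\Re w\le2$. This gives $\int|L(\frac12+\delta+iy)|^ke^{-(y-t)^2}\,dy\ll1+\int|L(\frac12+iy)|^ke^{-(y-t)^2}\,dy$ at no cost.

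In both cases, truncating to $|v|\le\log^2T$ and using $e^{-v^2}\ll e^{-|v|}$ finishes the proof. For the application in Section 3 an extra $\log T$ would be harmless, but the lemma as stated claims a single $\log T$.
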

	\begin{proof}
		The proof is similar to \cite[Lemma 7.1]{Ivic}, where we need to use the following function equation which can be found in \cite{Iwaniec}
		\begin{align*}
			\Lambda(s,\textup{sym}^{2}f)=\Lambda(1-s,\textup{sym}^{2}f),
		\end{align*}
	where
	\begin{align*}
		\Lambda(s,\textup{sym}^{2}f)=\pi^{3s/2}\Gamma\left(\frac{s+1}{2}\right)\Gamma\left(\frac{s+k-1}{2}\right)\Gamma\left(\frac{s+k}{2}\right)L(s,\textup{sym}^{2}f).
	\end{align*}
	\end{proof}
	\begin{Lem}\label{intsum}
		For $1/2<\sigma< 1$,
		\[
		\int_{1}^{T}\vert L(\sigma+it, \textup{sym}^2 f)\vert^{m(\sigma)} \textup{d}t
		\ll T^{1+\varepsilon}
		\]
		is equivalent to
		\[
		\sum_{r\leq R}\vert L(\sigma+it_r, \textup{sym}^2 f)\vert^{m(\sigma)}
		\ll T^{1+\varepsilon},
		\]
		where
		\begin{align}\label{tr}
			T\leq t_r\leq 2T \ \text{for}\ r=1,...,R;
			\ \vert t_r-t_s\vert \geq \log^C T\ \text{for}\ 1\leq r\neq s\leq R.
		\end{align}
	\end{Lem}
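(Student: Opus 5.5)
The equivalence in Lemma \ref{intsum} is the standard passage between an integral and a sum over well-spaced points. I would prove the two directions separately, with $m=m(\sigma)$ (or any fixed $m>0$ in its place). The only real input is the smoothing inequality of Lemma \ref{short}, which controls a pointwise value by a local average. Throughout, I would first reduce to dyadic ranges $[T,2T]$; summing over $O(\log T)$ dyadic blocks only costs a factor $T^{\varepsilon}$.

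\emph{From the integral bound to the discrete bound.} Take points $t_r$ satisfying (\ref{tr}) and apply Lemma \ref{short} at each $t=t_r$. This gives
\[
\sum_{r\le R}|L(\sigma+it_r,\textup{sym}^2 f)|^{m}\ll R+\log T\int_{-\log^2T}^{\log^2T}\sum_{r\le R}\Big|L\Big(\sigma-\tfrac{1}{\log T}+it_r+iv,\textup{sym}^2 f\Big)\Big|^{m}e^{-|v|}\,\textup{d}v .
\]
The spacing gives $R\ll T$. For $C>2$ the intervals $[t_r-\log^2T,\,t_r+\log^2T]$ are pairwise disjoint, so after the change of variable $u=t_r+v$ the right-hand side is at most $\log T\int_{T/2}^{3T}|L(\sigma'+iu,\textup{sym}^2 f)|^{m}\,\textup{d}u$, where $\sigma'=\sigma-1/\log T$.

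The shift from $\sigma$ to $\sigma'$ has to be absorbed. The moment exponent is defined up to $T^{\varepsilon}$, and $m(\sigma)$ is non-decreasing in $\sigma$ by convexity (Gabriel/Hardy--Littlewood type). It is therefore cleaner to prove the equivalence for any fixed exponent $m$ below $m(\sigma)$, giving statements with an arbitrary $\varepsilon$. Alternatively one can apply the argument of Lemma \ref{short} with $\sigma+1/\log T$ in place of $\sigma$, so that the shifted integral sits exactly on the line $\Re s=\sigma$. This second route is the one I would take, since Lemma \ref{short} holds uniformly for $\frac12<\sigma<1$ and $T^{O(1/\log T)}=O(1)$ changes nothing.

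\emph{From the discrete bound to the integral bound.} Split $[T,2T]$ into intervals of length $\log^{C}T$. On every second interval, pick the point where $|L(\sigma+it,\textup{sym}^2 f)|$ is maximal; this point exists by continuity. Doing this separately for the odd-indexed and the even-indexed intervals gives two families, each satisfying (\ref{tr}). Then
\[
\int_T^{2T}|L(\sigma+it,\textup{sym}^2 f)|^m\,\textup{d}t\le \log^{C}T\sum_{\text{two families}}\sum_{r}|L(\sigma+it_r,\textup{sym}^2 f)|^m\ll T^{1+\varepsilon}.
\]
Summing dyadically over $T$ finishes this direction.

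The main obstacle is the bookkeeping in the first direction. One must make the shifted abscissa in Lemma \ref{short} land back on the line $\Re s=\sigma$, or otherwise justify the loss, while using the disjointness of the windows of length $2\log^2T$. This is where the size of $C$ in (\ref{tr}) gets fixed, and it forces the implied uniformity in $\sigma$.
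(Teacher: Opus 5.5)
Your second direction (discrete $\Rightarrow$ integral, via maxima on alternate intervals of length $\log^C T$) is correct. It matches the standard reduction in Ivi\'c's Section 8.1, to which the paper's proof simply refers, and it is the only direction the proof of Theorem \ref{theorem1} actually uses.

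The gap is in the first direction, exactly at the shifted abscissa you flag. Applying Lemma \ref{short} with $\sigma+1/\log T$ in place of $\sigma$ moves the \emph{left-hand side} off the line too. What you obtain is a bound for $\sum_r|L(\sigma+\frac{1}{\log T}+it_r,\textup{sym}^2 f)|^m$, not for $\sum_r|L(\sigma+it_r,\textup{sym}^2 f)|^m$. There is no pointwise inequality $|L(\sigma+it_r,\textup{sym}^2 f)|\ll|L(\sigma+\frac{1}{\log T}+it_r,\textup{sym}^2 f)|$; it fails near zeros of $L$. The heuristic $T^{O(1/\log T)}=O(1)$ holds for the terms $n^{-s}$ of a Dirichlet polynomial, not for values of $L$. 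Your first route does not close the gap either, since monotonicity of $m(\cdot)$ says nothing about the $T$-dependent line $\Re s=\sigma-1/\log T$.

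The missing ingredient is a convexity estimate for the mean values themselves, namely Gabriel's theorem, which holds for every real exponent $q>0$.
\begin{itemize}
\item Apply it in the strip $\frac12\le\Re s\le\sigma$ to $F(s)=L(s,\textup{sym}^2 f)\,e^{(s-i\tau)^2/T^2}$ with $\tau=\frac{7}{4}T$.
\item In that strip the Gaussian factor has modulus $\asymp e^{-(t-\tau)^2/T^2}$, so it localizes to $t\asymp T$ and makes $F$ decay as $|t|\to\infty$.
\item On $\Re s=\sigma$ the weighted integral is $\ll T^{1+\varepsilon}$, by your hypothesis, the Gaussian decay, and symmetry in $t$ (the coefficients are real).
\item On $\Re s=\frac12$ take any crude bound $T^{A}$, e.g.\ from Lemma \ref{sym2sub}.
\end{itemize}
This gives
\[
\int_{T/2}^{3T}\Big|L\Big(\sigma-\frac{1}{\log T}+it,\textup{sym}^2 f\Big)\Big|^m\textup{d}t\ll T^{A\theta}\,T^{(1+\varepsilon)(1-\theta)}\ll T^{1+\varepsilon},\qquad \theta=\frac{1}{(\sigma-\frac12)\log T},
\]
because $T^{A\theta}=e^{A/(\sigma-1/2)}=O(1)$. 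This is the correct form of your $T^{O(1/\log T)}$ remark. Together with your disjoint-windows computation, it completes the first direction.

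A smaller point: Lemma \ref{short} is stated for an integer $k$, whereas $m$ is real. Use it with $k=1$ and then apply H\"older with respect to $e^{-|v|}\textup{d}v$. This works because $m\ge1$, and it costs only a factor $\log^m T$.
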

	\begin{proof}
	The proof is trivial learn from \cite[Sec.8.1, P200]{Ivic}.
	\end{proof}
	\begin{Lem}\label{sumnumber}
		Suppose that $1/2<\sigma<1$ is fixed. Then
		\begin{equation}\label{R}
		R\ll T^{1+\varepsilon}V^{-m(\sigma)}
		\end{equation}
		is equivalent to
		\begin{equation}\label{ER}
		\sum_{r\leq R}\vert L(\sigma+it_r, \textup{sym}^2 f)\vert^{m(\sigma)}
		\ll T^{1+\varepsilon},
		\end{equation}
		where $t_r$ is defined by (\ref{tr}) and
		\[
		\vert L(\sigma+it_r, \textup{sym}^2 f)\vert\geq V\geq T^{\varepsilon}.
		\]
	\end{Lem}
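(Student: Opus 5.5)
The statement to prove is Lemma~\ref{sumnumber}: for points $t_r$ as in (\ref{tr}) with $|L(\sigma+it_r,\textup{sym}^2 f)|\ge V\ge T^{\varepsilon}$, the count bound (\ref{R}) is equivalent to the moment bound (\ref{ER}). We prove the two directions separately. The implication (\ref{ER})$\Rightarrow$(\ref{R}) is immediate. Since every term satisfies $|L(\sigma+it_r,\textup{sym}^2 f)|\ge V$,
\[
RV^{m(\sigma)}\le \sum_{r\le R}|L(\sigma+it_r,\textup{sym}^2 f)|^{m(\sigma)}\ll T^{1+\varepsilon}.
\]
This gives (\ref{R}) directly.

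For the converse we use a dyadic decomposition in the size of $|L|$. Let $\{t_r\}$ be any system satisfying (\ref{tr}), with no lower bound on the values yet.

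\emph{Small values.} The points with $|L(\sigma+it_r,\textup{sym}^2 f)|<T^{\varepsilon}$ number at most $R\ll T/\log^C T$, because the $t_r$ lie in $[T,2T]$ and are $\log^C T$-spaced. They therefore contribute at most $T^{1+\varepsilon m(\sigma)}$. Since $m(\sigma)$ is bounded, this is $T^{1+\varepsilon'}$ after renaming $\varepsilon$.

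\emph{Large values.} We also need an upper bound for $|L|$. The convexity (or trivial polynomial) bound gives $|L(\sigma+it,\textup{sym}^2 f)|\ll T^{A}$ for some fixed $A$ (e.g.\ $A=3/4$ suffices on $\sigma>1/2$). We split the range $T^{\varepsilon}\le V\le T^{A}$ into $O(\log T)$ dyadic intervals $[V,2V)$. For each $V$, the subsystem $\{t_r: V\le|L(\sigma+it_r,\textup{sym}^2 f)|<2V\}$ still satisfies (\ref{tr}). Hence (\ref{R}) applies to it and gives $R_V\ll T^{1+\varepsilon}V^{-m(\sigma)}$. The contribution of this interval is therefore
\[
\ll R_V(2V)^{m(\sigma)}\ll T^{1+\varepsilon}.
\]
Summing over the $O(\log T)$ intervals yields (\ref{ER}).

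The only point needing care is the uniformity of $\varepsilon$ and of the implied constants. The bound (\ref{R}) must be assumed uniformly for all admissible systems and all $V\ge T^{\varepsilon}$. Granting that, the $\log T$ factor and the factor $2^{m(\sigma)}$ are absorbed into $T^{\varepsilon}$. If $m(\sigma)$ is read as a supremum rather than an attained value, we run the same argument with any $m<m(\sigma)$; both statements are insensitive to this choice up to $T^{\varepsilon}$. No deep input is required: the lemma is a bookkeeping reduction that lets one bound the moment by counting large values, as in \cite[Sec.~8.1]{Ivic}.
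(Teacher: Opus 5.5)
Your proof is correct and is the standard Ivi\'c-type reduction: the trivial direction $RV^{m(\sigma)}\le\sum_{r\le R}|L(\sigma+it_r,\textup{sym}^2 f)|^{m(\sigma)}$, plus a dyadic splitting of the values of $|L|$ between $T^{\varepsilon}$ and a polynomial upper bound. The paper gives no argument of its own and only cites \cite{Wang}, where this is presumably the reduction intended; you also rightly make explicit what the paper leaves implicit, namely that the equivalence requires \eqref{R} to hold uniformly over all admissible systems and all $V\ge T^{\varepsilon}$.
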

	\begin{proof}
See \cite{Wang}.
	\end{proof}
	
	\begin{Lem}\label{Dirichletvalue}
		Let $t_1<\cdots<t_R$ be real numbers such that $T\leq t_r\leq 2T$ for
		$r=1,2,\cdots, R$ and $\vert t_r-t_s\vert \geq \log^4 T$ for
		$1\leq r\neq s\leq R$. If
		\begin{align*}
			T^{\varepsilon}<V\leq\left\vert\sum_{M\leq n\leq 2M}a(n)n^{\sigma-it_r}\right\vert,
		\end{align*}
		where $a(n)\ll M^\varepsilon$ for $M\leq n\leq 2M$, $1\ll M\ll T^C(C>0)$. Then
		\[
		R\ll T^\varepsilon\left(M^{2-2\sigma}V^{-2}+TV^{-f(\sigma)}\right).
		\]
		Here
		\begin{align*}
			f(\sigma)=
			\begin{cases}
				\frac{2}{3-4\sigma} &\textup{if}\ \frac 12<\sigma\leq \frac 23,\\
				\frac{10}{7-8\sigma} &\textup{if}\ \frac 23<\sigma\leq \frac{11}{14},\\
				\frac{34}{15-16\sigma} &\textup{if}\ \frac{11}{14}<\sigma\leq \frac{13}{15},\\
				\frac{98}{31-32\sigma} &\textup{if}\ \frac{13}{15}<\sigma\leq \frac{57}{62},\\
				\frac{5}{1-\sigma} &\textup{if}\ \frac{57}{62}<\sigma\leq 1-\varepsilon.
			\end{cases}
		\end{align*}
	\end{Lem}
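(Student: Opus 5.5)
The plan is to follow the large-values method for Dirichlet polynomials in Ivi\'c \cite[Ch.~8]{Ivic}. The estimate is purely about the polynomial $\sum_{M\le n\le 2M}a(n)n^{\sigma-it}$, so nothing specific to $\textup{sym}^2 f$ is used. The bound has two terms. The term $M^{2-2\sigma}V^{-2}$ comes from the mean value theorem. The term $TV^{-f(\sigma)}$ comes from the Halász--Montgomery inequality, combined with bounds for $\zeta$ on vertical lines (exponent pairs) and with raising the polynomial to a suitable power.

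\textbf{Step 1 (Halász--Montgomery).} Normalise $b(n)=a(n)n^{-\sigma}$, writing the hypothesis as $V\le |\sum b(n)n^{-it_r}|$; I take this as the intended normalisation, since it is the one under which $M^{2-2\sigma}V^{-2}$ is the mean value term. Apply the Halász--Montgomery inequality with a smooth weight supported on $[M/2,4M]$, and express the off-diagonal sums $\sum_n w(n/M)n^{-i(t_r-t_s)}$ via Mellin inversion as integrals of $\zeta(\frac12+i(t_r-t_s)+iu)$ on the half-line. This gives
\[
R V^2\ll T^{\varepsilon}\Big(M^{1-2\sigma}\big(M+R\,M^{1/2}T^{\theta}\big)\Big),
\]
where $\theta$ is any exponent with $\zeta(\frac12+it)\ll |t|^{\theta+\varepsilon}$. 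The spacing $|t_r-t_s|\ge\log^4T$ controls the Mellin tail, and for $|t_r-t_s|\ll T^{\varepsilon}$ there is no contribution besides $r=s$. If the second term is at most $\frac12 RV^2$, we get $R\ll T^{\varepsilon}M^{2-2\sigma}V^{-2}$. Otherwise $V^2\ll M^{3/2-2\sigma}T^{\theta+\varepsilon}$, which is impossible once $M$ is small compared with a power of $V$.

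\textbf{Step 2 (raising to a power).} If $M$ is large, so that Step 1 fails, replace the polynomial by its $k$-th power: a polynomial of length $M^k$ with coefficients $\ll M^{k\varepsilon}$ and large value $V^k$ at the same points. Choose $k$ so that $M^k$ lies in the range where the above argument gives $R\ll T^{\varepsilon}(M^{k(2-2\sigma)}V^{-2k}+TV^{-f})$. The term $T$ arises from the trivial $R\ll T$ when $M^k\approx T$. Sharper versions, which produce the successive pieces $\frac{2}{3-4\sigma},\frac{10}{7-8\sigma},\frac{34}{15-16\sigma},\frac{98}{31-32\sigma}$, replace the fourth-moment or $\theta$ input by the estimate for $\sum_{n\le N} n^{-i\tau}$ coming from the exponent pairs $(\frac1{2^{j}-2},\cdot)$ obtained by iterated $A$-processes. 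That estimate has the shape $N^{1/2}\tau^{1/(2^{j+1}-2)}$, in line with the denominators $2^{j+1}-2^{j+2}\sigma-1$. For $\sigma>\frac{57}{62}$, the term $T V^{-5/(1-\sigma)}$ follows from a Vinogradov-type bound $\sum n^{-i\tau}\ll N^{1-c\log^2 N/\log^2\tau}$ together with the trivial estimate. In each range one takes $M^{k}$ as large as the method allows, bounded in terms of $T$ and $V$, and optimises $k$ (allowing fractional powers via splitting $M$). The breakpoints $\frac23,\frac{11}{14},\frac{13}{15},\frac{57}{62}$ are exactly where consecutive choices give equal exponents.

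\textbf{Main obstacle.} The delicate part is the bookkeeping in Step 2. One has to check that for each range of $\sigma$ the optimal choice of $k$, and of exponent pair, yields precisely $f(\sigma)$. One also has to ensure that the mean value term after powering, $M^{k(2-2\sigma)}V^{-2k}$, never dominates the stated $M^{2-2\sigma}V^{-2}+TV^{-f(\sigma)}$. This relies on $V\ge T^{\varepsilon}$ and on choosing $k$ minimal so that $M^k$ reaches the critical length. Since this is the standard computation of Ivi\'c \cite[Lemma 8.2 and (8.30)--(8.33)]{Ivic}, I would reproduce his case analysis and verify that the constants match the piecewise definition of $f(\sigma)$.
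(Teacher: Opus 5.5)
\textbf{Gap: the proposed mechanism does not produce $TV^{-f(\sigma)}$.} The paper proves this lemma only by citing Ivi\'c [Lemma 8.2], and you also end by deferring to Ivi\'c. However, the mechanism you sketch is not how $TV^{-f(\sigma)}$ arises, and carried out as written it would not prove the lemma.

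\textbf{Missing idea: Huxley's subdivision in the Hal\'asz--Montgomery inequality.} Your Step 1 only gives a dichotomy: either $R\ll T^{\varepsilon}M^{2-2\sigma}V^{-2}$, or $V^2\ll M^{3/2-2\sigma}T^{\theta+\varepsilon}$. The second alternative gives no bound for $R$ at all. It certainly occurs, e.g.\ whenever $V\le T^{\theta/2}$ and $\sigma\le\frac34$. It can never produce a term linear in $T$.

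What one does instead is the following.
\begin{itemize}
\item Split $[T,2T]$ into $\ll 1+T/T_0$ intervals of length $T_0$.
\item In each interval, apply Hal\'asz--Montgomery with a smooth weight and the exponent-pair bound $\sum_{n\sim M}n^{-i\tau}\ll|\tau|^{\kappa}M^{\lambda-\kappa}$ for $|\tau|\le T_0$.
\item Choose $T_0$ so that the off-diagonal term is at most $\frac12R_0V^2$.
\end{itemize}
This gives
\[
R\ll T^{\varepsilon}\Bigl(M^{2-2\sigma}V^{-2}+TM^{1-2\sigma+(1+\lambda-2\sigma)/\kappa}V^{-2-2/\kappa}\Bigr).
\]
For the pairs $(\kappa,\lambda)=(\frac12,\frac12),(\frac16,\frac23),(\frac1{14},\frac{11}{14}),(\frac1{30},\frac{13}{15})$, the second term becomes respectively
\[
TM^{4-6\sigma}V^{-6}\ \text{(Huxley's estimate)},\quad TM^{11-14\sigma}V^{-14},\quad TM^{26-30\sigma}V^{-30},\quad TM^{57-62\sigma}V^{-62}.
\]
The mean value theorem adds the further bound $TM^{1-2\sigma}V^{-2}$.

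\textbf{How $f(\sigma)$ arises.} In each range of $\sigma$, take two consecutive estimates from this list: one is decreasing in $M$ and the other increasing. Their minimum is at most the value at their crossing point.
\begin{itemize}
\item For $\frac12<\sigma\le\frac23$: $\min\bigl(TM^{1-2\sigma}V^{-2},\,TM^{4-6\sigma}V^{-6}\bigr)\le TV^{-2/(3-4\sigma)}$, with crossing at $M=V^{4/(3-4\sigma)}$.
\item The consecutive pairs with $V$-exponents $(6,14)$, $(14,30)$, $(30,62)$ cross at $M^{7-8\sigma}=V^{8}$, $M^{15-16\sigma}=V^{16}$, $M^{31-32\sigma}=V^{32}$. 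These give exactly $\frac{10}{7-8\sigma}$, $\frac{34}{15-16\sigma}$, $\frac{98}{31-32\sigma}$.
\item For $\sigma>\frac{57}{62}$: the exponent $57-62\sigma$ is negative. Together with $M\gg (VT^{-\varepsilon})^{1/(1-\sigma)}$ this gives $TV^{-5/(1-\sigma)}$. So this range comes from the pair $(\frac1{30},\frac{13}{15})$, not from a Vinogradov-type bound.
\end{itemize}
The breakpoints $\frac23,\frac{11}{14},\frac{13}{15},\frac{57}{62}$ are the values $\lambda$ at which the $M$-exponent changes sign. No power-raising is involved.

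\textbf{Problems with your Step 2.}
\begin{itemize}
\item It relies on a false input. A bound of your shape $\sum_{n\le N}n^{-i\tau}\ll N^{1/2}\tau^{1/14}$ would imply $\zeta(\frac12+it)\ll t^{1/14+\varepsilon}$. The iterated $A$-process pairs give only $\tau^{\kappa}N^{\lambda-\kappa}$ with $\lambda-\kappa>\frac12$.
\item Raising the polynomial to the $k$-th power replaces $M^{2-2\sigma}V^{-2}$ by its $k$-th power. That is at least as large, since $V\ll M^{1-\sigma+\varepsilon}$.
\end{itemize}
So the obstacle you flagged is real. The actual argument avoids it by the interpolation above rather than solving it.
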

	\begin{proof}
		See Ivi\'{c} \cite[Lemma 8.2]{Ivic} for details.
	\end{proof}
	
\begin{Lem}\label{sym2sub}
		For any $\varepsilon>0$, then
\begin{equation*}
\int_{-T}^{T}\left\vert L\left(\frac{1}{2}+it,\textup{sym}^2f\right)\right\vert^2\textup{d}t\ll T^{\frac{4}{3}+\varepsilon},
\end{equation*}
and for $\frac{1}{2}\leq\sigma\leq1+\varepsilon$, 
		\begin{equation*}
			L(\sigma+it,\textup{sym}^{2}f)\ll(1+\vert t\vert)^{\max\{\frac{8}{7}(1-\sigma),0\}+\varepsilon}.
		\end{equation*}
	\end{Lem}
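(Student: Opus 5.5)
The plan is to take the two analytic inputs at the critical line from the recent literature on $L(s,\textup{sym}^2 f)$. Everything off the line then follows from convexity (Phragmén--Lindelöf).

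\textbf{Second moment.} The bound
\[
\int_{-T}^{T}\left|L\left(\tfrac12+it,\textup{sym}^2 f\right)\right|^2\textup{d}t\ll T^{\frac43+\varepsilon}
\]
is the recent non-trivial second moment estimate for this degree-three $L$-function. It beats the trivial $T^{3/2+\varepsilon}$ that comes from the approximate functional equation of length $T^{3/2}$. I would quote it directly, after checking two things. First, the result is stated for the holomorphic eigenform $f$ of level one, not only for general $GL(3)$ Maass forms. Second, a dyadic version $\int_T^{2T}$ implies the symmetric version $\int_{-T}^{T}$. The latter is immediate, since $\overline{L(\frac12+it,\textup{sym}^2f)}=L(\frac12-it,\textup{sym}^2f)$ because the coefficients are real, and one sums over dyadic ranges.

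\textbf{Pointwise bound: the two endpoints.} For the second assertion I need bounds at the two endpoints of the strip.
\begin{itemize}
\item On the critical line I would quote the known subconvexity bound
\[
L\left(\tfrac12+it,\textup{sym}^2f\right)\ll(1+|t|)^{\frac47+\varepsilon}.
\]
This is below the convexity exponent $\frac34$ coming from the three Gamma factors in the functional equation recalled in Lemma \ref{short}. This is the one genuinely deep input.
\item At $\sigma=1+\varepsilon$ the Dirichlet series converges absolutely. Since $|\lambda_{\textup{sym}^2f}(n)|\le d_3(n)$ by Deligne's bound (\ref{1.1}), we get $L(1+\varepsilon+it,\textup{sym}^2f)\ll_\varepsilon 1$.
\end{itemize}

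\textbf{Interpolation.} The function $L(s,\textup{sym}^2f)$ is entire, since $f$ is a cusp form and $\textup{sym}^2 f$ is cuspidal by Gelbart--Jacquet. It has finite order in the strip $\frac12\le\sigma\le1+\varepsilon$, by Stirling applied to the completed $\Lambda(s,\textup{sym}^2f)$. The Phragmén--Lindelöf principle, applied to $L(s,\textup{sym}^2f)$ times a suitable power of $(s+i)$, then makes the exponent a convex function of $\sigma$ bounded by the linear interpolation. That interpolation runs between $\frac47+\varepsilon$ at $\sigma=\frac12$ and $\varepsilon$ at $\sigma=1+\varepsilon$, which gives
\[
\frac47\cdot\frac{1+\varepsilon-\sigma}{\frac12+\varepsilon}+\varepsilon=\frac87(1-\sigma)+O(\varepsilon).
\]
This is the claimed exponent $\max\{\frac87(1-\sigma),0\}+\varepsilon$. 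The maximum with $0$ only matters for $1<\sigma\le1+\varepsilon$, where the bound is trivially $O(1)$ by absolute convergence.

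\textbf{Main obstacle.} The only real difficulty is the critical-line subconvexity exponent $\frac47$, together with the $T^{4/3}$ second moment. These cannot be derived from the lemmas above; for instance, Lemma \ref{short} only converts integral bounds into local ones and would give nothing better than convexity. I would therefore take both from the cited recent works, verifying that their hypotheses cover level one and holomorphic $f$ of weight $k$.
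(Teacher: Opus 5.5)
Your proposal is correct and is essentially the paper's argument. The paper simply cites Dasgupta et al.\ for both parts, and your Phragm\'en--Lindel\"of interpolation between the exponent $\frac{4}{7}$ at $\sigma=\frac{1}{2}$ and $0$ at $\sigma=1+\varepsilon$ just makes explicit where $\frac{8}{7}(1-\sigma)$ comes from.

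One small correction to a side remark: Lemma \ref{short} combined with the $T^{4/3+\varepsilon}$ second moment does give a subconvex bound, namely $L(\frac{1}{2}+it,\textup{sym}^2f)\ll|t|^{2/3+\varepsilon}$. Interpolating this only yields the exponent $\frac{4}{3}(1-\sigma)$, so your conclusion that the $\frac{4}{7}$ bound must be imported from the literature still stands.
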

	\begin{proof}
		See Dasguta et al. \cite{DLY}.
	\end{proof}	
	\section{Proof of Theorem \ref{theorem1}}
	
	By Lemma \ref{intsum} and Lemma \ref{sumnumber}, it suffices to prove
	\begin{equation}\label{eq1}
		R\ll T^{1+\varepsilon}V^{-m(\sigma)}.
	\end{equation}
	Standardly, Mellin's transform implies that
	\[
	\sum_{n\geq 1} \lambda_{\text{sym}^2 f}(n) e^{-n/Y}n^{-s}
	=\frac{1}{2\pi i}\int_{(2)}Y^{w}\Gamma(w)L(s+w, \text{sym}^2 f)\textup{d}w,
	\]
	where $Y$ is a parameter which will be chosen later and  $s=\sigma+it_r$ with
	$\frac{2}{3}\leq \sigma <1$ and $t_r$ satisfying the condition
	of Lemma \ref{intsum}.
	Moving the integral line to $\Re w=\frac{1}{2}-\sigma$, by the Cauchy's residue theorem,
	we get
	\begin{align*}
		\sum_{n\leq Y} \lambda_{\text{sym}^2 f}(n) e^{-n/Y}n^{-s}
		=L(s, \text{sym}^2 f)+\frac{1}{2\pi i}\int_{(\frac{1}{2}-\sigma)}
		Y^{w}\Gamma(w)L(s+w, \text{sym}^2 f)\textup{d}w.
	\end{align*}
	For $T \to \infty$, Stirling's formula implies that
	\begin{align*}
		L(s, \text{sym}^2 f)\ll &\left\vert \sum_{n\leq Y}\lambda_{\text{sym}^2 f}(n)
		e^{-\frac{n}{Y}}n^{-s}\right\vert \\
		&+\left|\int_{-\log^2 T}^{\log^2 T}Y^{(\frac{1}{2}-\sigma)}\Gamma
		\left(\left(\frac{1}{2}-\sigma\right)+iv\right)
		L\left(\frac{1}{2}+it_r+iv, \text{sym}^2 f\right)\textup{d}v\right|,
	\end{align*}
	which implies that either
	\begin{align*}
		V\ll \log T \max_{M\leq Y/2}\left\vert \sum_{M\leq n\leq 2M}
		\lambda_{\text{sym}^2 f}(n) e^{-\frac{n}{Y}}n^{-s}\right\vert
	\end{align*}
	or
	\begin{align*}
		V\ll Y^{\left(\frac{1}{2}-\sigma\right)}\log^2 T
		\left\vert L\left(\frac{1}{2}+it_r^\prime, \text{sym}^2 f\right)\right\vert.
	\end{align*}
	Here
	\[
	\left\vert L\left(\frac{1}{2}+it_r^\prime, \text{sym}^2 f\right)\right\vert
	=\max_{-\log^2 T\leq v\leq \log^2 T}\left\vert
	L\left(\frac{1}{2}+it_r+iv, \text{sym}^2 f\right)\right\vert.
	\]
It's easy to verify that the conditions of  Lemma \ref{intsum} and \ref{sumnumber}, according to the defintion of $t_r^{\prime}$ and the analytic properties of $L(s,\textup{sym}^{2}f)$. Therefore, it suffices to prove 
	\begin{equation*}
		R\ll T^{1+\varepsilon}V^{-m(\sigma)}.
	\end{equation*}
For convenice, denote $z:=m(\sigma)$, we next prove that
\begin{equation*}
		R\ll T^{1+\varepsilon}V^{-z}.
	\end{equation*}
By Lemma \ref{Dirichletvalue}, we have
\[
	R\ll T^{\varepsilon}\left(Y_1^{2-2\sigma}V^{-2}
	+TV^{-f(\sigma)}\right)+Y^{\frac{1}{2}-\sigma}V^{-1}
	\sum_{r\leq R_1}\left\vert L\left(\frac{1}{2}+it_r^\prime, \text{sym}^2 f\right)\right\vert.
	\]
Then we need separate two cases to bound $R$.\\
Case 1: $\frac{5}{8}\leq \sigma\leq\frac{2}{3}$.\\
At this position, $f(\sigma)=\frac{2}{3-4\sigma}$, then
\begin{equation}\label{wholeR}
R\ll T^{\epsilon}\left(Y_1^{2-2\sigma}V^{-2}
	+TV^{-\frac{2}{3-4\sigma}}\right)+Y^{\frac{1}{2}-\sigma}V^{-1}
	\sum_{r\leq R}\left\vert L\left(\frac{1}{2}+it_r^\prime, \text{sym}^2 f\right)\right\vert.
\end{equation}
By Lemma \ref{short} and along with Lemma \ref{sym2sub}, we get
	\begin{align}\label{power2}
		\sum_{r\leq R}\left\vert
		L\left(\frac{1}{2}+it_r^\prime, \text{sym}^2 f\right)\right\vert^{2}
		&\ll\sum_{r\leq R}\log T\left(1+\int_{-\log^{2}T}^{\log^{2}T}\left\vert L\left(\frac{1}{2}+it_{r}+iv,\text{sym}^{2}f\right)\right\vert^{2}\exp(-\vert v\vert)\right)\text{d}v \notag\\
&\ll R_1T^{\varepsilon}+\log T\int_{T}^{2T+\log^2 T} \left\vert
		L\left(\frac{1}{2}+i t, \text{sym}^2 f\right)\right\vert ^2 \textup{d}t \notag \\
		&\ll T^{\frac{4}{3}+\varepsilon}.
	\end{align}
From Cauchy inequation and \eqref{power2}, we obtain
	\[
	R\ll T^{\varepsilon}\left(Y_1^{2-2\sigma}V^{-2}
	+TV^{-\frac{2}{3-4\sigma}}\right)+Y^{1-2\sigma}V^{-2}T^{\frac{4}{3}+\varepsilon}.
	\]
Taking $Y=T^{\frac{4}{3}}$, then
\[
R\ll T^{1+\varepsilon}V^{-\frac{2}{3-4\sigma}}+T^{\frac{8}{3}-\frac{8}{3}\sigma+\varepsilon}V^{-2}.
\]
Set
\[
T^{1+\varepsilon}V^{-\frac{2}{3-4\sigma}}\ll T^{1+\varepsilon}V^{-z},
\]
we get
\[
z\leq\frac{2}{3-4\sigma}.
\]
Let
\[
T^{\frac{8}{3}-\frac{8}{3}\sigma+\varepsilon}V^{-2}\ll T^{1+\varepsilon}V^{-z}.
\]
Now we need consider for $z<2$ and $z\geq2$. For the first case, if $z<2$, we have
\[
V\gg T^{(\frac{5}{3}-\frac{8}{3}\sigma)\cdot\frac{1}{2-z}}.
\]
Since $\frac{5}{8}\leq \sigma\leq \frac{2}{3}$, we get
\[
V\gg T^{(\frac{5}{3}-\frac{8}{3}\sigma)\cdot\frac{1}{2-z}}\gg T^{-\varepsilon}.
\]
And  notice that $V\gg T^{\varepsilon}$, thus the above always holds for $z<2$. Similarly for $z=2$. Next arrive at $z>2$.
In this position, we get
\[
V\ll T^{(\frac{5}{3}-\frac{8}{3}\sigma)\cdot\frac{1}{2-z}}.
\]
For 
\[
V\gg T^{(\frac{5}{3}-\frac{8}{3}\sigma)\cdot\frac{1}{2-z}},
\]
by H\"{o}lder inequality,  we have
	\[
	R\ll Y^{2-2\sigma}V^{-2}+TV^{-\frac{2}{3-4\sigma}}
	+Y^{\frac{1}{2}-\sigma}V^{-1}
	R^{\frac{3}{4}}\left(\sum_{r\leq R}\left\vert
	L\left(\frac{1}{2}+it_r^{\prime}, \text{sym}^{2}f\right)\right\vert^4\right)
	^{\frac{1}{4}}.
	\]
According to Lemma \ref{sym2sub} and \eqref{power2},then
\[
R\ll Y^{2-2\sigma}V^{-2}T^{\varepsilon}+T^{1+\varepsilon}V^{-\frac{2}{3-4\sigma}}+Y^{2-4\sigma}V^{-4}T^{\frac{52}{21}+\varepsilon}.
\]
Taking $Y=T^{\frac{26}{21\sigma}}V^{-\frac{1}{\sigma}}$, we have
\[
R\ll T^{\frac{52(1-\sigma)}{21\sigma}+\varepsilon}V^{-\frac{2}{\sigma}}+T^{1+\varepsilon}V^{-\frac{2}{3-4\sigma}}.
\]
Let
\[
T^{\frac{52(1-\sigma)}{21\sigma}+\varepsilon}V^{-\frac{2}{\sigma}}\ll T^{1+\varepsilon}V^{-z}.
\]
Here we can suppose $z<\frac{2}{\sigma}$. In fact, for $z>\frac{2}{\sigma}$, we can essentially get a contradict on the range of $V$. Hence
\[
V\gg T^{(\frac{52}{21\sigma}-\frac{73}{21})\cdot\frac{1}{\frac{2}{\sigma}-z}}.
\]
Conbining the inequality about $z$, we get an inequality system
\begin{align*}
\begin{cases}
(\frac{5}{3}-\frac{8}{3}\sigma)\cdot\frac{1}{2-z}\leq(\frac{52}{21\sigma}-\frac{73}{21})\cdot\frac{1}{\frac{2}{\sigma}-z},\\
z\leq\frac{2}{3-4\sigma},\\
z>2,\\
z<\frac{2}{\sigma}.
\end{cases}
\end{align*}
Then we get
\[
z\geq\frac{17}{26-28\sigma}.
\]
Notice that for $\sigma=\frac{5}{8}$, $\frac{34}{52-56\sigma}=2$. Thus for $\frac{5}{8}\leq\sigma\leq\frac{2}{3}$, $m(\sigma)\geq\frac{17}{26-28\sigma}$.\\
Case 2 : $\frac{2}{3}\leq \sigma \leq \frac{52}{73}$.\\
In this case, suppose $z\geq\frac{17}{26-28\sigma}$, and we essentially to prove it. And notice that $f(\sigma)=\frac{10}{7-8\sigma}$, we get
\begin{equation*}
R\ll T^{\epsilon}\left(Y_1^{2-2\sigma}V^{-2}
	+TV^{-\frac{10}{7-8\sigma}}\right)+Y^{\frac{1}{2}-\sigma}V^{-1}
	\sum_{r\leq R}\left\vert L\left(\frac{1}{2}+it_r^\prime, \text{sym}^2 f\right)\right\vert.
\end{equation*}
For convenice, by Case 1, according to $V\leq T^{\frac{26-28\sigma}{21}}$ or not, we separate \{$t_r$\}into two parts $S_1$, $S_2$. For $S_1$, by Cauchy inequality and \eqref{power2}, we obtain
	\[
	R_1\ll T^{\varepsilon}\left(Y_1^{2-2\sigma}V^{-2}
	+TV^{-\frac{10}{7-8\sigma}}\right)+Y^{1-2\sigma}V^{-2}T^{\frac{4}{3}+\epsilon}.
	\]
Taking $Y_1=T^{\frac{4}{3}}$, then
	\[
	R_1\ll T^{\frac{8}{3}-\frac{8}{3}\sigma+\varepsilon}V^{-2}+T^{1+\varepsilon}V^{-\frac{10}{7-8\sigma}}
	.
	\]
By $V\leq T^{\frac{26-28\sigma}{21}}$, we get
\[
R_1\ll T^{1+\varepsilon}V^{-\frac{17}{26-28\sigma}}.
\]
To estimate $R_2$, we apply H\"{o}lder inequality, then
	\[
	R_2\ll Y_2^{2-2\sigma}V^{-2}+TV^{-\frac{10}{7-8\sigma}}
	+Y_{2}^{\frac{1}{2}-\sigma}V^{-1}
	R_2^{\frac{3}{4}}\left(\sum_{r\leq R}\left\vert
	L\left(\frac{1}{2}+it_r^{\prime}, \text{sym}^{2}f\right)\right\vert^4\right)
	^{\frac{1}{4}},
	\]
where we have used the subconvexity bound of $L(s, \text{sym}^2 f)$ and \eqref{power2}. Simplifying it, we have
	\[
	R_2\ll Y^{2-2\sigma}V^{-2}T^{\varepsilon}+T^{1+\varepsilon}V^{-\frac{2}{3-4\sigma}}+Y^{2-4\sigma}V^{-4}T^{\frac{52}{21}+\varepsilon}.
	\]
Taking $Y_2=T^{\frac{26}{21\sigma}}V^{-\frac{1}{\sigma}}$, then 
\[
R_2\ll T^{\frac{52(1-\sigma)}{21\sigma}+\varepsilon}V^{-\frac{2}{\sigma}}+T^{1+\varepsilon}V^{-\frac{10}{7-8\sigma}},
\] 
along with
\[
V\geq T^{\frac{26-28\sigma}{21}},
\]
hence
	\[
	R_2\ll T^{1+\varepsilon}V^{-\frac{17}{26-28\sigma}}.
	\]
Therefore, we have
	\[
	R=R_1+R_2\ll T^{1+\varepsilon}V^{-\frac{17}{26-28\sigma}}.
	\]
It completes the proof.

\halign{\small\qquad\quad#\hfill\qquad&\small#\hfill\qquad&\small#\hfill \qquad &
	\small#\hfill \cr
	Youjun Wang    \cr                                          
	School of Mathematics and Statistics, Henan University, Kaifeng, Henan 475004,                                    
	P. R. China                                               \cr
	\texttt{math$\_$wyj@henu.edu.cn}                               \cr
}
\end{document}